\def\vph{\varphi}
\def\wh{\widehat}
\def\ch{{\mathrm{ch}}}
\newtheorem{lem}{Lemma}
\newtheorem{thm}{Theorem}
\newtheorem{thmR}{Theorem}
\newtheorem*{mad-lem}{Mad Lemma}
\newtheorem{conj}{Conjecture}
\theoremstyle{definition}
\newtheorem{clm}{Claim}
\newcommand{\Kc}{\textrm{Kc}}
\newcommand\floor[1]{\lfloor#1\rfloor}
\newcommand\ceil[1]{\lceil#1\rceil}
\newcommand\Ix[1]{\mathbf{1}_#1}
\newenvironment{clmproof}[1]{\par\noindent\underline{Proof.}\space#1}{\leavevmode\unskip\penalty9999\hbox{}\nobreak\hfill\quad\hbox{$\diamondsuit$}\smallskip}
\title{Kempe Classes and Almost Bipartite Graphs}
\author{Daniel W. Cranston\thanks{%
Department of Computer Science, Virginia Commonwealth
University, Richmond, VA, USA;
\texttt{dcranston@vcu.edu}%; 
}
\and
Carl Feghali\thanks{%
Univ. Lyon, EnsL, UCBL, CNRS, LIP, F-69342, Lyon Cedex 07, France,
\texttt{feghali.carl@gmail.com}%
}
}
\begin{document}
%\title{List-Recoloring of Sparse Graphs}
\date{}
\maketitle
\abstract{
Let $G$ be a graph and $k$ be a positive integer, and let $\Kc(G, k)$ denote the number of Kempe equivalence classes for the $k$-colorings of $G$.   In 2006, Mohar noted that $\Kc(G, k) = 1$ if $G$ is bipartite. As a generalization, we show that $\Kc(G, k) = 1$ if $G$ is formed from a bipartite graph by adding any number of edges less than $\binom{\ceil{k/2}}2+\binom{\floor{k/2}}2$. We show that our result is tight (up to lower order terms)  by constructing, for each $k \geq 8$, a graph $G$ formed from a bipartite graph by adding $(k^2+8k-45+1)/4$ edges such that $\Kc(G, k) \geq 2$. This refutes a recent conjecture of Higashitani--Matsumoto.
}
\section{Introduction}
In a \emph{(proper) $k$-coloring} of a graph $G$, each vertex is assigned a color from
$\{1,\ldots,k\}$ and adjacent vertices receive distinct colors.  Given a
$k$-coloring $\vph$, an \emph{$i,j$-Kempe swap} interchanges the colors of some component of the subgraph
induced under $\vph$ by colors $i$ and $j$. Clearly, this
yields a new proper $k$-coloring.  

Kempe swaps were introduced in the late
1800s by Alfred Kempe, in a failed attempt to prove the 4 Color Theorem.
However, the idea was largely salvaged by Heawood, who used this technique to
prove the 5 Color Theorem.  Kempe swaps did play a crucial role in the eventual
proof of the 4 Color Theorem~\cite{AH} and its later
reproof~\cite{RSST}.  Kempe swaps have also played a central role in the study
of edge-coloring, including work of Vizing~\cite{vizing1,vizing2},
Kierstead~\cite{kierstead,EFK}, Tashkinov~\cite{tashkinov},
and many others~\cite{HiltonZhao4,GS-proved,GS-survey}.
All of this work asks one meta-question ``Does a given graph (or line graph)
admit a $k$-coloring?''

However, this sustained focus on the use of Kempe swaps eventually prompted two
interesting related questions: ``Given a graph $G$ and two $k$-colorings
$\vph_1$ and $\vph_2$, is it possible to transform $\vph_1$ into $\vph_2$ by a
sequence of Kempe swaps?'' and %, maintaining a proper $k$-coloring at each step?''\ and
``Is it possible to transform \emph{every} $k$-coloring into every
other $k$-coloring by a sequence of Kempe swaps?''
For a graph $G$ and a positive integer $k$, two $k$-colorings are
\emph{equivalent} if one can be transformed to the other by a sequence of Kempe
swaps. Let $\Kc(G,k)$ denote the number of equivalence classes.
Now the latter question can be rephrased as asking whether or not $\Kc(G,k)=1$.
(Such questions fall, more broadly, under the heading of \emph{reconfiguration},
which is surveyed by van den Heuvel~\cite{vdH-survey} and
Nishimura~\cite{nishimura-survey}.)  

Early work in this area was done by
Vizing~\cite{vizing1,vizing2}
(for edge-coloring), Meyniel~\cite{meyniel}, Fisk~\cite{fisk}, and Las Vergnas and
Meyniel~\cite{LVM}.  %After a nearly 30-year hiatus, 
In 2006 this line of study was reinvigorated by a
beautiful survey of Mohar~\cite{mohar-survey}, which proposed a variety of open
questions.  The abundance of results that has followed includes work on regular
graphs~\cite{BBFJ,FJP}, planar graphs~\cite{feghali-critical,strengthened-meyniel}, 
grids~\cite{mohar-salas,salas-sokal,CM-toroidal}, line graphs
(edge-coloring)~\cite{MMS,BDKLN,narboni}, and much
more; it even has a list-coloring variant~\cite{list-mohars, edge-swap}.

Mohar \cite{mohar-survey} noted (and it is easy to check) that $\Kc(B,k)=1$ when $B$ is any
bipartite graph and $k\ge 2$.  This motivated Higashitani and Matsumoto \cite{HM} to study
$\Kc(B+E_{\ell},k)$, where $B + E_{\ell}$ is a graph formed from $B$ by adding $\ell$ edges.  Of course, every graph can be viewed as a $B+E_{\ell}$
graph for some $\ell$.  So it is natural to impose bounds on
$\ell$ in terms of some other features of the graph.  This led
Higashitani and Matsumoto to conjecture the following, which they proved in the
case $k=4$.

\begin{conj}[Higashitani--Matsumoto; now disproved for $k\ge 8$]
\label{main-conj}
If $G$ is a $(k-1)$-colorable $B + E_\ell$ graph with $k \ge 4$ and $\ell <
\binom{k}{2}$, then $\Kc(G, k) = 1$.
\end{conj}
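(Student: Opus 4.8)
The first thing I would try is to push Mohar's argument for bipartite graphs as far as it will go. For a bipartite $B$ with parts $X$ and $Y$ one shows that every coloring is Kempe-equivalent to the canonical coloring giving $X$ color $1$ and $Y$ color $2$; the only place this breaks for $G=B+E_\ell$ is at the added edges lying \emph{inside} a part (edges added between $X$ and $Y$ keep the graph bipartite and can be absorbed into $B$, so they are harmless and should not count). Writing $H_X$ and $H_Y$ for the graphs formed by the inside-$X$ and inside-$Y$ added edges, the natural target is a \emph{split-palette} coloring: color $X$ properly from a palette $A$ and $Y$ properly from the complementary palette, which exists once $\chi(H_X)+\chi(H_Y)\le k$. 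The real content is not the existence of this target but \emph{reachability}: showing by Kempe swaps that every coloring can be driven to a split-palette coloring and that any two split-palette colorings are equivalent. I expect this to be the main obstacle, and I expect the room needed to maneuver on each side to be exactly what caps the number of tolerable inside edges.

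Counting suggests the cap is near $\binom{\ceil{k/2}}2+\binom{\floor{k/2}}2$, not the conjectured $\binom k2$. Indeed $\binom{\ceil{k/2}}2+\binom{\floor{k/2}}2$ is the number of edges in two balanced cliques $K_{\ceil{k/2}}$ and $K_{\floor{k/2}}$, and two vertices of $X$ (resp.\ of $Y$) can be joined only by an added edge; so this is precisely the cheapest way to force enough inside structure for the split-palette argument to run out of room. Since $\binom{\ceil{k/2}}2+\binom{\floor{k/2}}2\sim k^2/4$ is only half of $\binom k2\sim k^2/2$, I would stop trying to prove the conjecture and instead look for a counterexample in the gap, consistent with the annotation that it is false for $k\ge 8$.

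To build one I would use a \emph{frozen} coloring: a $k$-coloring in which, for every pair of colors, the subgraph induced by those two color classes is connected. Such a coloring is rigid, because the $i,j$-subgraph is a single Kempe component, so every $i,j$-swap is the global transposition of the labels $i$ and $j$; hence every coloring reachable from a frozen one induces the \emph{same} partition into color classes and uses all $k$ colors. If, in addition, $G$ is $(k-1)$-colorable, then it also has a coloring using at most $k-1$ classes, which must lie in a different Kempe class, giving $\Kc(G,k)\ge 2$. The construction places the $k$ color classes into the two parts, uses the free bipartite edges of $B$ to connect every cross-part pair of classes, and uses two near-balanced cliques (cost $\approx \binom{\ceil{k/2}}2+\binom{\floor{k/2}}2$ inside edges) to connect every same-part pair.

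The delicate point, and the reason the counterexample costs a few more than $\binom{\ceil{k/2}}2+\binom{\floor{k/2}}2$ edges, is $(k-1)$-colorability: with singleton classes the construction collapses to $K_k$, which is not $(k-1)$-colorable, so the classes must be enlarged just enough to admit a $(k-1)$-coloring while keeping every same-part pair connected, at a cost of $O(k)$ extra inside edges. Balancing these two requirements so that the total stays below $\binom k2$ (in fact at $(k^2+8k-44)/4$) while the coloring remains frozen is where I expect the real work to lie; matching this with a positive result that holds all the way up to $\binom{\ceil{k/2}}2+\binom{\floor{k/2}}2$ is what would make the bound tight up to lower-order terms.
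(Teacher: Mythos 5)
Your overall strategy is the paper's own: realize that the true threshold is of order $k^2/4$ rather than $\binom{k}{2}$, and refute the conjecture by constructing a $(k-1)$-colorable $B+E_\ell$ graph admitting a \emph{frozen} $k$-coloring, whose entire Kempe class then consists of colorings with the same $k$ nonempty color classes, hence cannot contain any $(k-1)$-coloring, giving $\Kc(G,k)\ge 2$. That logic is sound, and your edge accounting (balanced split of the parts, $O(k)$ overhead beyond two balanced cliques, total $(k^2+8k-44)/4$ for even $k$) matches the paper's $\ell=(k^2+8k-45+\Ix{{k-1}})/4$.

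However, the proposal stops exactly where the actual content begins. The difficulty you name but do not resolve --- ``enlarge the classes just enough to admit a $(k-1)$-coloring while keeping every pair of classes connected'' --- is the entire theorem. Connecting all pairs of color classes pushes the chromatic number up toward $k$ (with singleton classes you get $K_k$, as you note), and there is no generic argument that one can keep $\chi\le k-1$ while preserving frozenness; what is needed is a gadget with a frozen $m$-coloring whose chromatic number is strictly \emph{less} than $m$, i.e., a graph $\wh{H}$ with $\Kc(\wh{H},\chi(\wh{H})+1)\ge 2$. The paper produces one explicitly: take the categorical product $K_3\times K_4$ and delete the diagonal vertices $(1,1),(2,2),(3,3)$; the second-coordinate coloring is a frozen $4$-coloring (every two classes induce a connected subgraph, and this survives the deletion), while the first-coordinate coloring shows $\chi(\wh{H})=3$. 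The counterexample is then $G_k$ obtained from $K_{9,k-4}$ by putting the $21$ edges of $\wh{H}$ on the $9$-side and a clique on the $(k-4)$-side; since adding a dominating vertex preserves frozen colorings and raises the chromatic number by exactly one, $\Kc(G_k,k)\ge 2$ and $\chi(G_k)=k-1$, and for $k\ge 8$ one checks $21+\binom{k-4}{2}<\binom{k}{2}$ (rebalancing the two parts for $k\ge 12$ gives your claimed count). Moreover, the paper proves (Theorem~\ref{smallest-lem}) that any graph with $\Kc(G,\chi(G)+1)\ge 2$ has at least $9$ vertices, so this gadget is genuinely nontrivial and cannot be replaced by a soft existence argument; without constructing it (or something equivalent), your disproof does not go through.
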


The assumptions on $\ell$ and the chromatic number of $G$ in Conjecture \ref{main-conj} 
are necessary \cite{HM}.  Here we show that Conjecture~\ref{main-conj} is false for
all $k\ge 8$ (we suspect that it is true for $k\in\{5,6,7\}$). Let $\Ix{x}$ denote 
the function on positive integers $x$ that is 1 if $x$
is odd and 0 if $x$ is even.  That is, $\Ix{x}$ is equal to $x\bmod{2}$.

\begin{thm}\label{thm1}
For all $k\ge 8$, Conjecture~\ref{main-conj} is false.
Furthermore, for all $k\ge 12$, there exist $(k-1)$-colorable graphs $G_k$
with $\Kc(G_k,k)\ge 2$ and $G_k=B+E_{\ell}$ where $\ell=(k^2+8k-45+\Ix{{k-1}})/4$.
\label{thm1-first}
\end{thm}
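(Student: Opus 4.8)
The plan is to prove $\Kc(G_k,k)\ge 2$ by exhibiting two $k$-colorings that lie in different Kempe classes, and the whole argument hinges on one structural lemma. Call a proper $k$-coloring $\vph$ \emph{linked} if for every pair of colors $i,j$ the subgraph induced by $\vph^{-1}(i)\cup\vph^{-1}(j)$ is connected. The first step is to prove: if $\vph$ is linked, then every coloring Kempe-equivalent to $\vph$ is obtained from $\vph$ by permuting the color names, so that every coloring in the Kempe class of $\vph$ induces the same partition of $V(G_k)$ into nonempty color classes. The proof is a short induction. A single $i,j$-Kempe swap on a linked coloring has only one chain available to swap, namely the entire connected $i,j$-subgraph, so it merely transposes the labels $i$ and $j$ on all of $\vph^{-1}(i)\cup\vph^{-1}(j)$. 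The resulting coloring is again linked (linkedness depends only on the underlying partition), so iterating, the Kempe class of $\vph$ is exactly $\{\sigma\circ\vph:\sigma\in S_k\}$, a set of relabelings of one fixed partition.

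With this in hand the problem reduces to a construction. Suppose I can build a $(k-1)$-colorable graph $G_k$ that admits a \emph{surjective} linked $k$-coloring $\vph$. Then $\vph$ uses all $k$ colors, so its partition has $k$ nonempty parts; any $(k-1)$-coloring $\psi$ (which exists, since $\chi(G_k)\le k-1$) has at most $k-1$ nonempty parts and hence a different partition, so $\psi$ is not a relabeling of $\vph$. By the lemma, $\psi$ is not Kempe-equivalent to $\vph$, giving $\Kc(G_k,k)\ge 2$. Provided the number $\ell$ of edges deleted to make $G_k$ bipartite satisfies $\ell<\binom k2$ (which holds since $\ell=(k^2+8k-45+\Ix{k-1})/4\approx k^2/4$ while $\binom k2\approx k^2/2$), this refutes Conjecture~\ref{main-conj}. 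One warning sign to keep in view: linkedness \emph{forces every vertex to see all $k-1$ other colors}, since a vertex $v$ of color $c$ lies in the connected subgraph on colors $c,m$ and therefore has a neighbor of color $m$ for each $m\ne c$. Thus $\vph$ is a complete coloring, and $G_k$ must simultaneously have achromatic number at least $k$ and $\chi(G_k)\le k-1$.

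For the construction itself I would make $G_k$ almost bipartite with parts $X,Y$, placing a near-balanced clique of size $\ceil{k/2}$ inside $X$ and one of size $\floor{k/2}$ inside $Y$. These clique edges are precisely the ones deleted to recover bipartiteness, and they supply the leading term $\binom{\ceil{k/2}}2+\binom{\floor{k/2}}2\approx k^2/4$ of $\ell$. Assigning one half of the colors within $X$ and the other half within $Y$ makes every within-half pair of colors linked through the corresponding clique. I then add a bipartite connector between $X$ and $Y$ together with $O(k)$ auxiliary vertices and edges, tuned so that (a) every cross pair of colors becomes linked and (b) a valid $(k-1)$-coloring survives. These auxiliary edges are what contribute the lower-order $8k-45$ term and the parity correction $\Ix{k-1}$, so that the total deleted-edge count lands exactly on $\ell=(k^2+8k-45+\Ix{k-1})/4$. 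The small cases $8\le k\le 11$, for which the clean formula is not claimed, I would settle by explicit ad hoc examples.

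The hard part is the connector in the previous paragraph, where the hypotheses pull against each other: linkedness pushes toward adding edges (every vertex must see every color and every cross pair must be connected), while $\chi\le k-1$ and the tight bipartization budget push the other way. In particular the two cliques together with the cross edges must \emph{not} complete into a $K_k$, for that would force $\chi=k$; yet all cross pairs must stay connected, which is exactly the configuration that is impossible with singleton color classes. This is why a few color classes must be duplicated and their pairs rerouted through length-$3$ alternating paths, and the delicate point is that such rerouting tends to re-force every vertex to see all colors and thereby destroy $(k-1)$-colorability. Verifying that after all the local modifications a global $(k-1)$-coloring still exists, and that the count of deleted edges is pinned to $\ell$ on the nose rather than merely $k^2/4+O(k)$ with an uncontrolled constant, is the crux of the proof; the arithmetic, including the parity term $\Ix{k-1}$, is where most of the bookkeeping lies.
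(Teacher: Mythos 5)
Your framework is sound and is in fact the same one the paper uses implicitly: your ``linked'' colorings are exactly the paper's \emph{frozen} colorings, your lemma that the Kempe class of a linked coloring consists only of its relabelings is correct, and the reduction (a surjective linked $k$-coloring together with $(k-1)$-colorability forces $\Kc(G_k,k)\ge 2$) is precisely how the paper argues. The gap is that the theorem's actual content is the construction, and you never produce it. Your last two paragraphs are a plan, not a proof: ``add a bipartite connector together with $O(k)$ auxiliary vertices and edges, tuned so that (a)\dots and (b)\dots,'' and you yourself flag as unresolved exactly the two points on which the theorem lives --- resolving the tension between linkedness (which, as you correctly observe, pushes toward $K_k$ and hence $\chi=k$) and $(k-1)$-colorability, and pinning the edge count to exactly $\ell=(k^2+8k-45+\Ix{{k-1}})/4$ rather than $k^2/4+O(k)$. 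The tension is real and cannot be dissolved by generic ``rerouting through alternating paths'' language: if all color classes were singletons, cross-pair linkedness would force all cross edges and yield $K_k$; so some classes must have size at least $2$, and one must exhibit a concrete gadget in which this works. Handling $8\le k\le 11$ ``by explicit ad hoc examples'' is likewise deferred, not done.

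The paper resolves all of this with one explicit $9$-vertex gadget: $\wh{H}$ is $K_3\times K_4$ with the diagonal $(1,1),(2,2),(3,3)$ deleted. It has $\chi(\wh{H})=3$, exactly $21$ edges (hence $15$ non-edges), and a frozen $4$-coloring whose classes have sizes $2,2,2,3$ --- exactly the non-singleton classes your obstruction demands. Then $G_k$ is the join of $\wh{H}$ with $K_{k-4}$: each added vertex is dominating, which preserves frozenness (a dominating singleton class is linked to everything) and raises the chromatic number and the palette by one at each step, so $\Kc(G_k,k)\ge 2$ for all $k\ge 4$ by induction. The exact formula for $\ell$ is then forced arithmetic: $G_k$ has $k+5$ vertices and its only non-edges are the $15$ non-edges of $\wh{H}$, so an equal split of the vertex set keeping $\wh{H}$ inside one part (possible exactly when $k\ge 12$) gives $f(k+5)-15=(k^2+8k-45+\Ix{{k-1}})/4$ within-part edges, while for $8\le k\le 11$ the unbalanced split $K_{9,k-4}$ gives $\binom{k-4}2+21<\binom{k}2$, enough to refute the conjecture. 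Note that the constants you were hoping to ``tune'' --- the $8k$ and the $-45$ --- encode the gadget's order ($9$) and edge deficiency ($15$); the paper's Theorem~\ref{smallest-lem} shows order $9$ is optimal, so your construction would in effect have to rediscover such a gadget. In short: right strategy, but the proof is missing precisely where the theorem is.
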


In our next result, we show that, in a certain sense, our constructions are the worst
possible (up to lower order terms). 

\begin{thm}\label{thm2}
If $G=B+E_{\ell}$ with
$\ell<\binom{\ceil{k/2}}2+\binom{\floor{k/2}}2=\frac{k^2-2k+\Ix{k}}4$, then
$\Kc(G,k)=1$.
\end{thm}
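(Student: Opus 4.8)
The plan is to reduce the whole statement to two independent applications of the classical theorem of Las Vergnas and Meyniel~\cite{LVM}, which guarantees $\Kc(H,t)=1$ whenever $t\ge\mathrm{col}(H)$, where $\mathrm{col}(H)$ denotes the coloring number of $H$ (one more than its degeneracy). First I would normalize the instance. Fix a bipartition $(X,Y)$ of $B$; since adding an edge between $X$ and $Y$ keeps $B$ bipartite, I absorb every edge of $E_\ell$ crossing the bipartition into $B$. After this I may assume each of the (at most $\ell$) added edges lies inside $X$ or inside $Y$, and I write $G[X],G[Y]$ for the graphs they induce, with $\ell_X,\ell_Y$ edges.

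The arithmetic heart is a degeneracy bound. Writing $d_X,d_Y$ for the degeneracies of $G[X],G[Y]$, any graph of degeneracy $d$ contains a subgraph of minimum degree $d$, hence at least $\binom{d+1}2$ edges, so
\[
\binom{d_X+1}2+\binom{d_Y+1}2\le \ell_X+\ell_Y\le\ell<\binom{\ceil{k/2}}2+\binom{\floor{k/2}}2 .
\]
The right side is exactly $\min\{\binom p2+\binom q2:p+q=k\}$, since $\binom p2+\binom q2$ is minimized at balanced $p,q$. A short monotonicity argument then forces $(d_X+1)+(d_Y+1)<k$, i.e.\ $\mathrm{col}(G[X])+\mathrm{col}(G[Y])\le k-1$. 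Consequently I can split the palette into disjoint sets $C_X,C_Y$ with $|C_X|\ge\mathrm{col}(G[X])$, $|C_Y|\ge\mathrm{col}(G[Y])$, and $|C_X|+|C_Y|=k$, and fix a reference coloring $\vph^\ast$ coloring $G[X]$ from $C_X$ and $G[Y]$ from $C_Y$; disjointness of the palettes makes every edge of $B$ automatically bichromatic, so $\vph^\ast$ is proper (in particular $G$ is $k$-colorable).

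It then suffices to show every $k$-coloring $\vph$ is equivalent to $\vph^\ast$, which I would do in two stages. Once a coloring uses only $C_X$ on $X$ and only $C_Y$ on $Y$, any Kempe swap between two colors of $C_X$ lives entirely inside $X$ (no vertex of $Y$ carries such a color) and is precisely a swap in $G[X]$; since $|C_X|\ge\mathrm{col}(G[X])$, Las Vergnas--Meyniel lets me reach the $X$-part of $\vph^\ast$, and symmetrically for $Y$. So the real content is the first stage: transforming an arbitrary $\vph$ into one with \emph{disjoint palettes}. For this I would use the bipartite primitive behind Mohar's observation: each component $D$ of the $\{i,j\}$-subgraph of $B$ is connected and bipartite, hence has exactly two proper $2$-colorings, interchanged by a single Kempe swap, so for a component aligned with $(X,Y)$ I may freely place color $i$ on $D\cap X$ and $j$ on $D\cap Y$. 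Processing the colors of $C_Y$ one at a time (pushing each off $X$), then the colors of $C_X$ one at a time (pushing each off $Y$), drives $\vph$ toward disjoint palettes, each step leaving already-sorted colors untouched because a swap on $\{i,j\}$ changes no other color.

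The main obstacle is precisely where $G$ departs from a bipartite graph: a component of the $\{i,j\}$-subgraph containing one of the added edges need not be aligned with $(X,Y)$, so the primitive cannot force its colors onto the intended side. The crux is to route around these \emph{blocked} pairs, and here I would exploit that the added edges are few. When pushing a color $j$ off $X$ I may use any pivot $p\in C_X$, and such a pivot is spoiled only by an added edge colored $\{j,p\}$; the very degeneracy/edge budget that made the palette split possible should guarantee enough unspoiled pivots, while the finitely many genuinely stuck vertices can be absorbed using the spare color left over from $\mathrm{col}(G[X])+\mathrm{col}(G[Y])\le k-1$. Making this accounting precise, and ordering the operations so that clearing one blocked pair does not recreate another, is the delicate part; everything else is bookkeeping on top of the two cited results.
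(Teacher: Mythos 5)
Your framework is sound right up to its final step: absorbing the crossing edges of $E_\ell$ into $B$, the bound $\binom{d_X+1}{2}+\binom{d_Y+1}{2}\le\ell$, the conclusion $\mathrm{col}(G[X])+\mathrm{col}(G[Y])\le k-1$ via monotonicity and convexity of $\binom{x}{2}$, and the observation that once the palettes are disjoint the two sides can be finished independently by Las Vergnas--Meyniel are all correct. But what you call the ``first stage'' --- transforming an arbitrary $k$-coloring into one where $X$ uses only $C_X$ and $Y$ only $C_Y$ --- is not a delicate detail to be filled in; it is the entire content of the theorem, and your proposal does not prove it. The difficulty is concrete. Call a vertex of $X$ colored from $C_Y$ (or vice versa) a \emph{defect}. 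Removing a defect $x\in X$ colored $j\in C_Y$ by your primitive requires a pivot $p\in C_X$ whose $\{j,p\}$-component at $x$ contains no added edge; a star of $|C_X|=\lceil k/2\rceil$ added edges from $x$ to vertices colored with the colors of $C_X$ blocks every pivot simultaneously, at a cost far below the budget $\ell\approx k^2/4$. Swapping a blocked component is still a legal Kempe swap, but it merely transfers the defect to the blocker ($x$ becomes correct while its neighbor becomes a defect), so the natural progress measure --- the number of vertices colored from their own side's palette --- is not monotone. Moreover every swap changes which pairs $\{j,p\}$ are spoiled, so you need a termination argument for a process in which defects migrate and the obstruction pattern shifts underneath you, against an adversary who can spend $\sim k^2/4$ edges (versus only $\sim k^2/2$ color pairs) on nested blocking structures. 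Your appeal to ``enough unspoiled pivots'' plus ``one spare color'' is exactly the accounting that is missing, and it is not clear it can be made to work at all.

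The paper avoids this difficulty by never fixing a target palette in advance. It takes a counterexample minimizing $|V(G)|$ and proves two claims: (1) every $k$-coloring is equivalent to some $(k-1)$-coloring --- this needs only the existence of a \emph{single} mergeable pair of colors, obtained by an adaptive sorting step (push each edge-free pair of colors onto opposite parts, with a monotone potential guaranteeing termination) followed by a static count: at least $\binom{k}{2}-\ell\ge\frac{k^2+4-\Ix{k}}{4}$ pairs have no added edge between their classes, while the non-mergeable ones inject into the cross-pairs of a bipartite graph on at most $k$ identified vertices, of which there are at most $\frac{k^2-\Ix{k}}{4}$; and (2) in a minimal counterexample both $G[S]$ and $G[T]$ are complete, proved by recoloring two nonadjacent vertices of one part with the spare color $k$, identifying them, and invoking minimality. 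Convexity of $\binom{x}{2}$ then forces $|V(G)|<k$, a contradiction. The weak, one-pair-at-a-time goal of (1) is what makes the counting close; your pre-designated full sorting is a far stronger statement, and proving it would require a genuinely new idea rather than bookkeeping.
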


\section{Proofs}

%Before disproving the conjecture, we make an easy observation and
%prove a lemma about a particular graph on 9 vertices, which will play a key role
%in our construction disproving the conjecture.

%\begin{defn}
%A graph is a $B+E_{\ell}$ graph if it is obtained from a bipartite graph $B$ by
%adding $\ell$ edges.  We denote the parts of $B$ by $S$ and $T$. 
%% and partition
%%the edge set $E_{\ell}$ as $E_S=E(G[S])$ and $E_T=E(G[T])$.
%\end{defn}

For easy reference, we record the following observation.

\begin{lem}
\label{f-lem}
If $f(x) := \binom{\ceil{x/2}}2+\binom{\floor{x/2}}2$,
%If $x$ is even, then $f(x)=(x^2-2x)/4$.
%And 
%if $x$ is odd, then $f(x) = (x^2-2x+1)/4$.
then $f(x) = (x^2-2x+\Ix{x})/4$.
\end{lem}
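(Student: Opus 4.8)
The plan is to prove this identity by a direct computation, splitting on the parity of $x$ and using $\binom n2=\tfrac{n(n-1)}2$. When $x$ is even, both $\ceil{x/2}$ and $\floor{x/2}$ equal $x/2$, so $f(x)=2\binom{x/2}2=\tfrac{x(x-2)}4=\tfrac{x^2-2x}4$; since $\Ix{x}=0$ in this case, this matches the claimed formula. When $x$ is odd, $\ceil{x/2}=(x+1)/2$ and $\floor{x/2}=(x-1)/2$, so expanding the two binomial coefficients and adding gives $f(x)=\tfrac{(x^2-1)+(x^2-4x+3)}8=\tfrac{x^2-2x+1}4$; since $\Ix{x}=1$ here, this again matches.

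Alternatively, one can avoid the case split entirely, which I would actually prefer for cleanliness. Writing $a=\ceil{x/2}=(x+\Ix{x})/2$ and $b=\floor{x/2}=(x-\Ix{x})/2$, we have $a+b=x$ and $a-b=\Ix{x}$. Then $\binom a2+\binom b2=\tfrac{a^2+b^2-(a+b)}2$, and substituting $a^2+b^2=\tfrac12\big((a+b)^2+(a-b)^2\big)=\tfrac12(x^2+\Ix{x})$ (here using $\Ix{x}^2=\Ix{x}$, since $\Ix{x}\in\{0,1\}$) yields $f(x)=\tfrac{x^2-2x+\Ix{x}}4$ immediately.

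There is no substantive obstacle here: the statement is an elementary algebraic identity, and the only thing to watch is that the parity indicator $\Ix{x}$ correctly tracks the gap between the floor and the ceiling. The unified approach is appealing precisely because it makes this bookkeeping automatic, funneling the parity dependence through the single relation $a-b=\Ix{x}$ together with the idempotence of $\Ix{x}$, so that no explicit case analysis is needed.
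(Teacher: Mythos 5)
Your first paragraph is exactly the paper's proof: the same parity case split, with identical computations $2\binom{x/2}{2}=\tfrac{x^2-2x}{4}$ in the even case and $\tfrac{(x^2-1)+(x^2-4x+3)}{8}=\tfrac{x^2-2x+1}{4}$ in the odd case, so the proposal is correct and matches the paper. Your alternative unified argument via $a+b=x$, $a-b=\Ix{x}$, and $\Ix{x}^2=\Ix{x}$ is also correct and arguably cleaner, but it is a stylistic refinement rather than a different proof in substance.
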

\begin{proof}
If $x$ is even, then $f(x) = 
2\binom{x/2}2 =
\frac{x}2\frac{x-2}2 = \frac{x^2-2x}4=\frac{x^2-2x+\Ix{x}}4$.
If $x$ is odd, then $f(x)=
\binom{(x+1)/2}2+\binom{(x-1)/2}2 =
\frac{x^2-1+x^2-4x+3}8 = \frac{x^2-2x+1}4=\frac{x^2-2x+\Ix{x}}4$.
\end{proof}
%\textbf{[Add pictures of $H$ and $\wh{H}$?]}

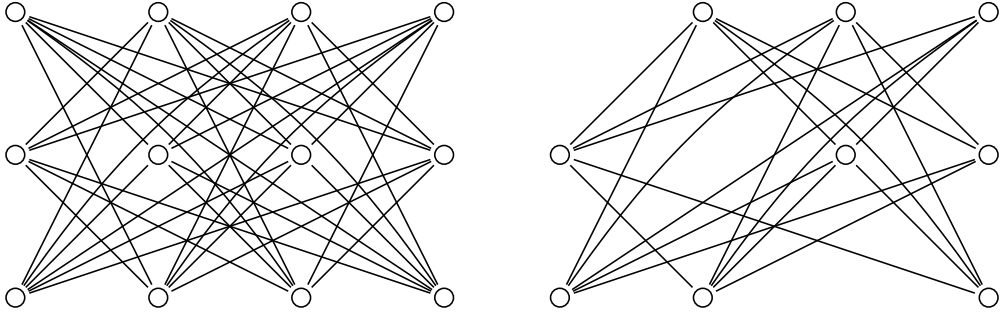
\begin{figure}[!h]
\centering
\begin{tikzpicture}[semithick, scale=1.9]
\tikzstyle{uCircle}=[shape = circle, minimum size = 5pt, inner sep = 2.5pt, outer sep = 2pt, fill = white, draw]
\tikzset{every node/.style=uCircle}

\foreach \i/\j in {1/1, 2/1, 3/1, 4/1, 1/2, 2/2, 3/2, 4/2, 1/3, 2/3, 3/3, 4/3}
\draw (\i,-\j) node (v\i\j) {};

\foreach \i/\j/\k/\l in {
1/1/2/2, 1/1/3/2, 1/1/4/2, 1/1/2/3, 1/1/4/3, %1/1/3/3, 
1/2/2/1, 1/2/3/1, 1/2/4/1, 1/2/2/3, 1/2/3/3, 1/2/4/3,
1/3/2/1, 1/3/4/1, 1/3/2/2, 1/3/3/2, 1/3/4/2, %1/3/3/1, 
2/1/3/2, 2/1/4/2, 2/1/3/3, %2/1/4/3,
2/2/3/1, 2/2/4/1, 2/2/3/3, 2/2/4/3,
2/3/3/1, 2/3/3/2, 2/3/4/2, %2/3/4/1, 
3/1/4/2, 3/1/4/3,%, 
3/2/4/1, 3/2/4/3,%, 
3/3/4/1, 3/3/4/2%, 
}
\draw (v\i\j) -- (v\k\l);
\draw (v11) edge[bend left=10] (v33);
\draw (v13) edge[bend left=10] (v31);
\draw (v21) edge[bend left=10] (v43);
\draw (v23) edge[bend left=10] (v41);

\begin{scope}[xshift=1.5in]

\foreach \i/\j in {2/1, 3/1, 4/1, 1/2, 3/2, 4/2, 1/3, 2/3, 4/3}
\draw (\i,-\j) node (v\i\j) {};

\foreach \i/\j/\k/\l in {
%1/1/2/2, 1/1/3/2, 1/1/4/2, 1/1/2/3, 1/1/4/3, %1/1/3/3, 
%
1/2/2/1, 1/2/3/1, 1/2/4/1, 1/2/2/3, 1/2/4/3, %1/2/3/3, 
1/3/2/1, 1/3/4/1, 1/3/3/2, 1/3/4/2, %1/3/2/2, %1/3/3/1, 
2/1/3/2, 2/1/4/2, %2/1/3/3, %2/1/4/3,
%
%2/2/3/1, 2/2/4/1, 2/2/3/3, 2/2/4/3,
%
2/3/3/1, 2/3/3/2, 2/3/4/2, %2/3/4/1, 
3/1/4/2, 3/1/4/3,%, 
3/2/4/1, 3/2/4/3%,%, 
%
%3/3/4/1, 3/3/4/2%, 
}
\draw (v\i\j) -- (v\k\l);
%\draw (v11) edge[bend left=10] (v33);
\draw (v13) edge[bend left=10] (v31);
\draw (v21) edge[bend left=10] (v43);
\draw (v23) edge[bend left=10] (v41);

\end{scope}
\end{tikzpicture}
\caption{Left: The graph $K_3\times K_4$, also called $H$. Right: The graph
$\widehat{H}$ formed from $K_3\times K_4$ by deleting vertices $(1,1)$, $(2,2)$,
and $(3,3)$. We note that $\chi(\widehat{H})=3$ but $\Kc(\widehat{H},4)\ge
2$.\label{fig0}}
\end{figure}

The proof of Theorem~\ref{thm1} relies crucially on a graph of order~9, shown on
the right in Figure~\ref{fig0}.

\begin{lem}\label{lem:h}
There exists a graph $\wh{H}$ of order 9 such that
$\Kc(\wh{H},\chi(\wh{H})+1)\ge 2$.
\end{lem}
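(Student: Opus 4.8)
The plan is to take for $\wh H$ the graph drawn on the right of Figure~\ref{fig0}: start from $H=K_3\times K_4$, whose vertices are the pairs $(i,j)$ with $i\in\{1,2,3,4\}$ and $j\in\{1,2,3\}$, with $(i,j)$ and $(k,l)$ adjacent exactly when $i\ne k$ and $j\ne l$, and delete the three ``diagonal'' vertices $(1,1)$, $(2,2)$, $(3,3)$. First I would pin down the chromatic number. Coloring each vertex $(i,j)$ by its second coordinate $j$ is proper, since adjacent vertices disagree in the second coordinate, and it uses only the colors $1,2,3$, so $\chi(\wh H)\le 3$; and $(1,2),(2,3),(3,1)$ is a triangle that survives the deletion, so $\chi(\wh H)\ge 3$. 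Hence $\chi(\wh H)=3$, and it remains to prove $\Kc(\wh H,4)\ge 2$.

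The engine of the argument is the following standard observation. Suppose a $4$-coloring $\vph$ has the property that for every pair of colors $a,b$ the bichromatic subgraph induced by the vertices colored $a$ or $b$ is connected (and nonempty). Then the only $a,b$-Kempe swap available is the global interchange of $a$ and $b$, which merely relabels two colors and leaves the partition of $V(\wh H)$ into color classes unchanged; moreover the resulting coloring again has all six bichromatic subgraphs connected, since these subgraphs occupy exactly the same vertex sets as before. Consequently, by induction on the number of swaps, every coloring in the Kempe class of such a $\vph$ induces the same partition of $V(\wh H)$ into color classes as $\vph$ does.

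I would then exhibit a coloring with this ``all-bichromatic-connected'' property: color each vertex $(i,j)$ by its first coordinate $i$. This is a proper $4$-coloring whose color classes are the four nonempty rows $R_1,\dots,R_4$, where $R_i$ is the set of surviving vertices with first coordinate $i$ (of sizes $2,2,2,3$). The crux is to check the six pairs: for each $\{a,b\}\subseteq\{1,2,3,4\}$ the subgraph of $\wh H$ on $R_a\cup R_b$ is connected. This is the one genuinely computational step, and I expect it to be the main obstacle only in the sense of requiring care rather than ideas — it is a short finite verification over six small subgraphs. Granting it, the Kempe class of this row coloring consists entirely of $4$-colorings whose color classes are $R_1,\dots,R_4$.

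Finally, I would compare this with the column coloring from the first paragraph, whose color classes are the three columns $C_1,C_2,C_3$, where $C_j$ is the set of surviving vertices with second coordinate $j$. This partition is visibly different from the row partition — for instance $(1,2)$ and $(1,3)$ lie in a common row but in different columns. Since every coloring Kempe-equivalent to the row coloring induces the row partition, the column coloring is not equivalent to it, giving two distinct Kempe classes and hence $\Kc(\wh H,\chi(\wh H)+1)=\Kc(\wh H,4)\ge 2$.
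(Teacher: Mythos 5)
Your proposal is correct and takes essentially the same approach as the paper: both exhibit the $4$-coloring of $\wh{H}$ by the coordinate taking four values, argue it is frozen because every pair of its color classes induces a connected subgraph (a finite check that both you and the paper leave to the reader, and which does hold), and then compare it against the $3$-coloring by the other coordinate, whose color classes form a different partition. The only cosmetic difference is that the paper first establishes the frozen coloring in the full product $K_3\times K_4$ and then restricts it to $\wh{H}$, while you work inside $\wh{H}$ directly.
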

\begin{proof}
Let $H:=K_3\times K_4$, the categorical product of $K_3$ and $K_4$, whose vertices are pairs
$(i,j)$, where $i\in\{1,2,3\}$ and $j\in\{1,2,3,4\}$ and $(i_1,j_1)$ is adjacent to
$(i_2,j_2)$ if and only if $i_1\ne i_2$ and $j_1\ne j_2$.  Form $\wh{H}$ from
$H$ by deleting $(1,1)$, $(2,2)$, and $(3,3)$.  Clearly
$\chi(H)=\chi(\wh{H})=3$.  Let $\alpha$ (resp.~$\beta$) be the coloring of $H$
that gives each vertex the color equal to its first (resp. second) coordinate.
Note that $\beta$ is frozen (up to permuting colors), since the subgraph induced
by any 2 color classes is connected.  Since $\alpha$ has different color classes
than $\beta$, we conclude that $\Kc(H,\chi(H)+1)\ge 2$.

 Let %$\wh{\alpha}$ and 
$\wh{\beta}$ denote the restriction of $\beta$ to $\wh{H}$. Now $\wh{\beta}$ is also frozen under Kempe swaps
(up to permuting colors); this is straightforward to check.  Thus,
$\Kc(\wh{H},\chi(\wh{H})+1)\ge 2$.
\end{proof}

We will see later that every graph $\wh{H}$ such that $\Kc(\wh{H},\ch(\wh{H})+1)\ge 2$ has
order at least 9. %which will be relevant soon.  
(We provide the details in Theorem~\ref{smallest-lem}.)
This fact is of interest because the graph's order determines the linear term
for $\ell$ in our construction.

We now prove Theorem~\ref{thm1}. 
For easy reference, we restate it.

\begin{thmR}
For all $k\ge 8$, Conjecture~\ref{main-conj} is false.
Furthermore, for all $k\ge 12$, there exist $(k-1)$-colorable graphs $G_k$
with $\Kc(G_k,k)\ge 2$ and $G_k=B+E_{\ell}$ where $\ell=(k^2+8k-45+\Ix{{k-1}})/4$.
%\label{thm1-first}
\end{thmR}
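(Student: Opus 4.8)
The plan is to take the order-$9$ graph $\wh H$ from Lemma~\ref{lem:h} and amplify its frozen $4$-coloring into a frozen $k$-coloring by joining $\wh H$ to a clique. Concretely, set $G_k := \wh H \vee K_{k-4}$, the join in which every vertex of $\wh H$ is made adjacent to every vertex of a new clique on $k-4$ vertices. Since chromatic number is additive under join, $\chi(G_k) = \chi(\wh H) + (k-4) = 3 + (k-4) = k-1$, so $G_k$ is $(k-1)$-colorable, as required by Conjecture~\ref{main-conj}. It then remains to show (i) $\Kc(G_k,k)\ge 2$ and (ii) $G_k = B + E_\ell$ for the claimed $\ell$.

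For (i), I would extend the frozen coloring $\wh\beta$ of $\wh H$ (which uses the colors $\{1,2,3,4\}$, by Lemma~\ref{lem:h}) to a coloring $\gamma$ of $G_k$ by giving the $k-4$ clique vertices the $k-4$ fresh colors $5,\dots,k$, one each. I claim $\gamma$ is frozen up to permuting colors, so that its Kempe class consists only of permutations of $\gamma$, each using all $k$ colors. To verify this, consider an $a,b$-Kempe swap and split into cases. If $a,b\in\{1,2,3,4\}$, the $a,b$-subgraph of $G_k$ is exactly that of $\wh\beta$ on $\wh H$ (the clique vertices avoid these colors), which is connected since $\wh\beta$ is frozen; so the swap is a global transposition. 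If $a\in\{1,2,3,4\}$ and $b\in\{5,\dots,k\}$, then $b$ occurs only on a single clique vertex $v_b$, which is adjacent through the join to every $a$-colored vertex, so the $a,b$-subgraph is a connected star, and the swap again transposes the two classes globally. If $a,b\in\{5,\dots,k\}$, the $a,b$-subgraph is the single clique edge $v_av_b$. In every case the swap merely transposes two color classes, so $\gamma$ is frozen. Finally, since $G_k$ is $(k-1)$-colorable it has a proper $k$-coloring using at most $k-1$ colors; such a coloring cannot be a permutation of $\gamma$, hence lies in a different Kempe class, giving $\Kc(G_k,k)\ge 2$.

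For (ii), write $n := k+5 = |V(G_k)|$. The clique and the join are complete, so the only non-adjacent pairs of $G_k$ are the $\binom{9}{2}-21 = 15$ non-edges inside $\wh H$, whence $|E(G_k)| = \binom{n}{2} - 15$. I would take the bipartition $(A,B)$ that places all of $V(\wh H)$ in $A$ and then distributes the clique vertices so that $\{|A|,|B|\}=\{\lceil n/2\rceil,\lfloor n/2\rfloor\}$; this is possible precisely because $k\ge 12$ forces $\lceil n/2\rceil \ge 9$. The edges of $G_k$ crossing this cut form a bipartite graph $B$, and since all $15$ non-edges lie inside $A$, the number of within-part edges is $\binom{|A|}{2}+\binom{|B|}{2}-15 = f(n)-15$, with $f$ as in Lemma~\ref{f-lem}. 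Thus $G_k = B + E_\ell$ with $\ell = f(k+5)-15$; since $\Ix{{k+5}}=\Ix{{k-1}}$, Lemma~\ref{f-lem} gives $\ell = \frac{(k+5)^2-2(k+5)+\Ix{{k-1}}}{4}-15 = \frac{k^2+8k-45+\Ix{{k-1}}}{4}$, as claimed. (For the weaker first assertion, covering $8\le k\le 11$ where this balanced bipartition need not exist, I would instead use the cruder cut separating $\wh H$ from the clique, giving $\ell = 21+\binom{k-4}{2}$; a short computation shows $\binom{k}{2}-\ell = 4k-31>0$ for all $k\ge 8$, so $G_k$ already refutes Conjecture~\ref{main-conj}.)

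I expect the main obstacle to be the frozenness verification in step (i): one must ensure that \emph{every} Kempe swap, not merely those internal to $\wh H$, yields a color permutation. This is exactly why joining to a full clique, rather than a sparser gadget, matters, since it forces each mixed-color and each clique-color bichromatic subgraph to be connected. The rest is careful but routine bookkeeping: matching the balanced-bipartition count to the stated closed form through Lemma~\ref{f-lem} and the parity of $k$, noting that the cut is optimal because any cut crosses at most $\lfloor n^2/4\rfloor$ pairs, and separately checking the inequality $\ell<\binom{k}{2}$ that certifies each $G_k$ as a genuine counterexample.
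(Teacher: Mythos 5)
Your proposal is correct and follows essentially the same route as the paper: your $G_k = \wh{H} \vee K_{k-4}$ is exactly the paper's construction (described there as $K_{9,k-4}$ with the edges of $\wh{H}$ added to the part of size $9$ and a clique on the other part), and both your balanced bipartition count for $k\ge 12$ and your cruder cut giving $\ell = 21+\binom{k-4}{2} < \binom{k}{2}$ for $k\ge 8$ match the paper's calculations. The only cosmetic difference is that you verify frozenness of the extended coloring $\gamma$ directly by case analysis on the two swapped colors, whereas the paper packages the same idea as an induction on adding dominating vertices one at a time.
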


\begin{proof}%[Proof of Theorem \ref{thm1}]
Let $J$ be an arbitrary graph and form $J'$ from $J$ by adding a dominating
vertex $v$.  It is easy to check that $\Kc(J,\chi(J)+1)=1$ if and only if
$\Kc(J',\chi(J')+1)=1$.  Moreover,
since $v$ is dominating, every subgraph induced by $v$ and some other color
class is connected.  Therefore, if $J$ has a frozen coloring, then so does $J'$. 

Let $\wh{H}$ be as in Lemma~\ref{lem:h} and form graph $G_k$ from the complete bipartite graph $K_{9,k-4}$
by adding the edges of $\wh{H}$ on the part of size 9 and adding all $\binom{k-4}2$ edges on the other part.
Since $\Kc(\wh{H},4)\ge 2$, also $\Kc(G_k,k)\ge 2$ for all $k\ge 4$, by %the result from 
the previous paragraph, inducting on $k$.
Now it suffices to choose $k$ so $G_k$ is an almost bipartite graph
$B+E_{\ell}$ where $\ell<\binom{k}{2}$.  Note that $\wh{H}$ is a 9-vertex graph
with 21 edges.
The number of edges within the part of size $k-4$ is simply $\binom{k-4}2$.
So we need $\binom{k-4}2+21 < \binom{k}2$.
This inequality expands to $k^2-9k+20+42 < k^2-k$, which holds for all $k\ge
8$.

For $k\ge 12$, to minimize the number of edges within parts, we split the
vertices of $G_k$ equally between parts, with all vertices of $\wh{H}$ in the same
part.  
Define $f(x)$ as in Lemma~\ref{f-lem}.
The number of edges within parts is now
$f(k+5)-(\binom{9}2-21)=\frac{(k+5)^2-2(k+5)+\Ix{{k-1}}}4-15 =
\frac{k^2+10k+25-2k-10+\Ix{{k-1}}}4-15 = \frac{k^2+8k-45+\Ix{{k-1}}}4$.
\end{proof}

We now prove Theorem \ref{thm2} which shows that the leading term, $k^2/4$, in the construction of 
Theorem~\ref{thm1} is best possible.
For easy reference, we restate it.

\begin{thmR}
If $G=B+E_{\ell}$ with $\ell<\binom{\ceil{k/2}}2 +
\binom{\floor{k/2}}2=\frac{k^2-2k+\Ix{k}}4$, then $\Kc(G,k)=1$.
\end{thmR}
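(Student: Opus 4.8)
The plan is to reduce to a clean extremal inequality that forces the two sides of the bipartition to be colorable from disjoint palettes, and then to transport an arbitrary coloring to a fixed ``separated'' coloring by adapting the standard two-phase argument for bipartite graphs.

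First I would normalize the instance. Write $B$ with parts $X,Y$. Any added edge joining $X$ to $Y$ respects the bipartition, so it may be absorbed into $B$; hence I may assume every one of the $\ell$ added edges lies inside $X$ or inside $Y$, say $|E_X|+|E_Y|\le \ell$, where $E_X,E_Y$ are the edge sets of $G[X],G[Y]$. Set $a:=\chi(G[X])$ and $b:=\chi(G[Y])$. A colour-critical subgraph of $G[X]$ has minimum degree at least $a-1$, so $|E_X|\ge\binom a2$, and likewise $|E_Y|\ge\binom b2$. Therefore $\binom a2+\binom b2\le \ell< f(k)$, with $f$ as in Lemma~\ref{f-lem}. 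Since $x\mapsto\binom x2$ is convex, for each fixed $m$ the quantity $\binom a2+\binom b2$ subject to $a+b=m$ is minimized at the balanced split $a=\ceil{m/2},\,b=\floor{m/2}$, where it equals $f(m)$; as $f$ is increasing, $a+b\ge k+1$ would give $\binom a2+\binom b2\ge f(k+1)>f(k)$, a contradiction. Hence $a+b\le k$. Fixing a proper colouring $\sigma_X\colon X\to\{1,\dots,a\}$ of $G[X]$ and a proper colouring $\sigma_Y\colon Y\to\{a+1,\dots,a+b\}$ of $G[Y]$ then yields a proper $k$-colouring $\vph^{*}:=\sigma_X\cup\sigma_Y$ of $G$ in which $X$ and $Y$ use disjoint sets of colours. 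This counting core is the clean part of the argument and explains why the threshold is exactly $f(k)$.

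It then remains to show every $k$-colouring of $G$ is Kempe-equivalent to $\vph^{*}$. Here I would follow the template of the bipartite case. Recall that for bipartite $B$ one first makes $X$ monochromatic (each $1,c$-swap at a misplaced vertex $v\in X$ moves a block of $X$ from $c$ to colour $1$ and, by bipartiteness, touches no already-placed vertex of $X$, so progress is monotone) and then recolours the independent set $Y$ vertex by vertex. I would aim to reach $\vph^{*}$ in two analogous phases: first drive $X$ into the block $S_X=\{1,\dots,a\}$ and onto $\sigma_X$, and then drive $Y$ into $S_Y=\{a+1,\dots,a+b\}$ and onto $\sigma_Y$. The point that makes this plausible is that $\vph^{*}$ is \emph{separated}: once the two sides use disjoint palettes, a Kempe swap on two colours of $S_X$ acts entirely inside $G[X]$, a swap on two colours of $S_Y$ acts entirely inside $G[Y]$, and a cross-swap (one colour from each block) acts on a \emph{bipartite} subgraph of $B$ and so behaves like a bipartite swap.

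The main obstacle is exactly what is trivial in the bipartite case: $X$ and $Y$ now carry internal edges, so neither side is independent and one cannot freely slide a single vertex between colours, nor is a single side Kempe-connected on its own (indeed $\wh{H}$ shows that $\Kc(G[X],|S_X|)=1$ can fail). I would overcome this by using the opposite side as a \emph{reservoir}: because $a+b\le k$ there is slack in the palette, and a misplaced vertex of $X$ can be moved by a cross-Kempe-chain that routes through colours of $Y$, after which internal $X$-swaps tidy up; the extra reconfiguration power coming from the bipartite ``highway'' to $Y$ (which has spare colours) is what replaces the missing independence. Making the bookkeeping monotone -- so that each step increases the number of correctly placed vertices without disturbing those already placed -- is the delicate step, and I would organize it as an induction on the number of misplaced vertices, with an inner induction on $k$ that peels off a colour class once a side has been fully placed. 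An alternative route worth trying is induction on $|V(G)|$: deleting a vertex preserves the hypothesis $\ell<f(k)$, so $\Kc(G-v,k)=1$ by induction, and one attempts to lift a swap sequence of $G-v$ to $G$ by keeping a free colour available at $v$; the difficulty there is again that $v$'s neighbours may span both parts, so a free colour need not persist under the lifted swaps.
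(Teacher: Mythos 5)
There is a genuine gap, and it is not one that routine bookkeeping can close. Your counting core is fine: with $a=\chi(G[X])$ and $b=\chi(G[Y])$, critical subgraphs give $\binom{a}{2}+\binom{b}{2}\le\ell<f(k)$, and convexity forces $a+b\le k$, so a separated coloring $\vph^*$ exists. But everything after that is only a plan, and the plan cannot work in the form you describe, because from that point on you use the hypothesis $\ell<f(k)$ \emph{only} through its consequence $a+b\le k$ (``slack in the palette''). That consequence is demonstrably insufficient: the paper's own counterexamples $G_k$ from Theorem~\ref{thm1-first} are built from $K_{9,k-4}$ by placing $\wh{H}$ (with $\chi(\wh{H})=3$) inside one part and a clique on the other, so they satisfy $\chi(G_k[X])+\chi(G_k[Y])=3+(k-4)=k-1\le k$ and admit a separated coloring, yet $\Kc(G_k,k)\ge 2$. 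So no argument whose only quantitative input is $a+b\le k$ (plus the bipartite ``highway'' between the parts) can prove Kempe connectivity; the bound $\ell<f(k)$ must re-enter the connectivity argument itself. Your own phase-two obstacles (non-independent parts, $G[X]$ possibly not Kempe-connected, free colors not persisting under lifted swaps) are real, and your sketch does not overcome them.

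For contrast, the paper's proof takes a minimal counterexample $G$ (minimizing $|V(G)|$, which must exceed $k$) and uses the edge bound twice, in two claims. First, every $k$-coloring is Kempe-equivalent to a $(k-1)$-coloring: for each pair $i,j$ with no $i,j$-edge in $E_\ell$, Kempe swaps push colors $i$ and $j$ onto single parts; since the number of such pairs is at least $\binom{k}{2}-\ell\ge\frac{k^2+4-\Ix{k}}4$ while the non-mergeable ones embed as edges of a bipartite graph on at most $k$ vertices (at most $\frac{k^2-\Ix{k}}4$ of them), some pair is mergeable, freeing a color. Second, both parts of $G$ must be complete: if $x,y$ in one part are nonadjacent, then any two colorings, made to avoid color $k$ by the first claim, can be recolored at $x,y$ with color $k$ and pushed to the graph obtained by \emph{identifying} $x$ and $y$ (not deleting a vertex, which is where your alternative route stalls); minimality finishes. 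Completeness of both parts then gives $\ell=\binom{|S|}{2}+\binom{|T|}{2}\ge f(|S|+|T|)$, and convexity together with $\ell<f(k)$ forces $|V(G)|<k$, a contradiction. Note how this argument never needs your separated coloring at all; the convexity step you discovered appears, but only at the very end and applied to complete parts. If you want to salvage your approach, you would need a replacement for the paper's merging claim -- some mechanism that uses the full strength of $\ell<f(k)$ to escape potentially frozen colorings -- since $a+b\le k$ alone cannot do it.
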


\begin{proof}%[Proof of Theorem \ref{thm2}]
Suppose the theorem is false, and let $G$ be a counterexample with $|V(G)|$ as
small as possible. 
Now $|V(G)| >  k$ since otherwise we can simply recolor all vertices with
distinct colors and permute colors as desired.  We require two claims.

\begin{clm}\label{claim:1}
Every $k$-coloring of $G$ is $k$-equivalent to some $(k-1)$-coloring.
\end{clm}
\begin{clmproof}
Fix an arbitrary $k$-coloring $\varphi$ of $G$ that uses exactly $k$ colors.  
An $i,j$-edge for $\vph$ is an edge of $E_{\ell}$ with colors $i$ and $j$ on its endpoints.

We first show that if there exists a pair of colors $i,j$ such that $E_{\ell}$ 
does not have any $i,j$-edge, but color $i$ or $j$ (or both) appears
on both parts $S$ and $T$, then there exists a sequence of Kempe swaps such
that after performing them colors $i$ and $j$ each appear on at most one part.  
(Specifically, color $i$ will appear only on $S$ and color $j$ only on $T$.)  
To do this, whenever $i$ appears on $T$ or $j$ appears on $S$, we use an $i,j$-Kempe 
swap.  Because $E_{\ell}$ contains no $i,j$-edge, this process achieves the desired
property.  

Each time that we perform the steps in the previous paragraph, the number
of colors used on at most one part increases, so this process eventually halts.
%after at most $k$ times.
%
Thus, we can assume from now on, that for every pair of colors $i, j$ of
$\varphi$, if $E_{\ell}$ has no $i,j$-edge, then in $\vph$ colors $i$ and $j$
each appear on at most one part.  

Suppose colors $i$ and $j$ each appear only on the same one part, and that
part has no $i,j$-edge.  Now colors $i$ and $j$ can be ``merged'' by recoloring
each vertex colored $i$ with $j$.  We call $i,j$ a mergeable pair for $\vph$.
To prove the claim, it suffices to show that $\vph$ must have a mergeable pair,
which we do now.

%We next show that there exist colors $i, j$ that each appear on the same
%part, but $E_{\ell}$ has no $i,j$-edge; note that such a pair of colors can
%be ``merged'' by recoloring each vertex colored $i$ with $j$.
%By Lemma~\ref{f-lem}, 
By hypothesis,
we have %$\ell\le f(k)-1
$\ell\le\frac{k^2-2k+\Ix{k}}4-1 =
\frac{k^2-2k-4+\Ix{k}}4$.  Thus, the number of pairs $i,j$ with no $i,j$-edge in
$E_{\ell}$ is at least $\binom{k}2-\ell\ge\frac{k(k-1)}2-\frac{k^2-2k-4+\Ix{k}}4 =
\frac{2k^2-2k-(k^2-2k-4+\Ix{k})}4 = \frac{k^2+4-\Ix{k}}4$.   If such a pair is 
not mergeable, then colors $i$ and $j$ must be on opposite parts of $B$.
%To bound the number of such pairs, we form a graph $B'$ from $B$ as follows.
%For each color $i$ such that $E_{\ell}$ contains an $i,j$-edge for all colors
%$j$ other than $i$, we delete all vertices colored $i$.  For each other color
%$i$, we delete all but one vertex colored $i$.  
We form a bipartite graph $B'$ from $B$ as follows.  For each color $i$ such
that there exists a color $j$ for which $E_{\ell}$ has no $i,j$-edge, we
identify all vertices colored $i$ under $\vph$ (as shown above, all
of these are in the same part).  For each $i$ such that
$E_{\ell}$ has an $i,j$-edge for all $j\ne i$, we delete all vertices colored
$i$.  The resulting bipartite graph $B'$ has at most $k$ vertices, so has at most
$\frac{k^2-\Ix{k}}4$ edges.  
%So the number of mergable pairs
%The resulting graph $B'$ has at most $k$ vertices.  
%But the maximum number of edges
%in a bipartite graph on $k$ vertices is at most $k^2/4$. 
%proving the second claim and implying Claim \ref{claim:1}.
Thus, the number of mergable pairs is at least
$\frac{k^2+4-\Ix{k}}4-\frac{k^2-\Ix{k}}4=1$.
%Merging such a pair yields the desired $(k-1)$-coloring.
\end{clmproof}

\begin{clm}
$G[S]$ and $G[T]$ are both complete graphs.
\end{clm}
\begin{clmproof}
Assume instead that there exist $x,y\in S$ with $xy\notin
E_{\ell}$.  Let $\alpha$ and $\beta$ be arbitrary $(k-1)$-colorings of $G$,
each avoiding color $k$.  By Claim \ref{claim:1}, every $k$-coloring of $G$ is 
$k$-equivalent to a $(k-1)$-coloring of $G$. So it suffices to show
$\alpha$ and $\beta$ are $k$-equivalent.

In each of $\alpha$ and $\beta$, recolor vertices $x$ and $y$ with color $k$ and
identify $x$ and $y$.  Call the resulting graph $G'$ and the resulting colorings
$\alpha'$ and $\beta'$.  By minimality, $\Kc(G',k)=1$.
In particular, $\alpha'$ and $\beta'$ are $k$-equivalent.  Thus, %This implies that
$\alpha$ and $\beta$ are $k$-equivalent in $G$.  
Hence, $\Kc(G,k)=1$, and $G$ is not a counterexample, a contradiction.
%Since $\alpha$ and $\beta$
%were arbitrary, it follows  that $\Kc(G,k)=1$. 
%This contradiction proves the claim.  
\end{clmproof}

\medskip

Since $G[S]$ and $G[T]$ are both complete, %we have 
$\ell = \binom{|S|}2+\binom{|T|}2$.  By the convexity of $\binom{x}2$, we have
$\ell\ge\binom{\ceil{(|S|+|T|)/2}}2+\binom{\floor{(|S|+|T|)/2}}2$.
By hypothesis, %we have 
$\ell<\binom{\ceil{k/2}}2+\binom{\floor{k/2}}2$.
Thus, $|V(G)|=|S|+|T|<k$.  This contradicts the second sentence of the proof,
and this final contradiction proves the theorem. 
\end{proof}

%\textbf{[In some ways, the next proof is tedious.  But I was actually
%pleasantly surprised how much it was possible to streamline considering all
%graphs of order at most 8.]}

We end this note with the proof of the following aforementioned result. 
%\newpage

\begin{thm}
\label{smallest-lem}
If $\Kc(G,\chi(G)+1)\ge 2$, then $|V(G)|\ge 9$.
\end{thm}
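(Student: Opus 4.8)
The plan is to take a counterexample $G$ of smallest order, set $k:=\chi(G)+1$, assume for contradiction that $|V(G)|\le 8$, and finish by a short case analysis on $k$. First I would record two reductions. Since any graph with $\chi\le 2$ is bipartite, Mohar's observation gives $\Kc(G,\chi(G)+1)=1$ for such $G$; hence $\chi(G)\ge 3$ and $k\ge 4$. Also $G$ has no dominating vertex: if $v$ were dominating, the fact recorded at the start of the proof of Theorem~\ref{thm1} (adding or deleting a dominating vertex preserves whether $\Kc(\cdot,\chi+1)=1$) would make $G-v$ a smaller counterexample. The engine of the argument is that a minimal counterexample carries a \emph{frozen} $k$-coloring $\varphi$, i.e.\ one in which every two color classes induce a connected subgraph, so that no Kempe swap changes the partition into color classes; this is exactly the obstruction exhibited by $\widehat{H}$ in Lemma~\ref{lem:h}. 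Given such a $\varphi$, all $k$ classes are nonempty, and no class can be a single vertex $x$, since connectivity of $G[\{x\},C]$ for every other class $C$ would force $x$ adjacent to all of $C$, making $x$ dominating. Thus every color class has at least two vertices.

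Summing over the $k$ classes gives $|V(G)|\ge 2k$. If $k\ge 5$ this already yields $|V(G)|\ge 10$, contradicting $|V(G)|\le 8$. So the theorem reduces to the single case $k=4$, where $8\ge|V(G)|\ge 2k=8$ forces $|V(G)|=8$ with four classes $A,B,C,D$, each of size $2$; it remains to rule out such a graph $G$ with $\chi(G)=3$.

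For the case $k=4$ I would argue as follows. Each $G[X,Y]$ (for distinct classes $X,Y$) is a connected subgraph of $K_{2,2}$, so it misses at most one of the four possible edges, as deleting two edges disconnects $K_{2,2}$. Consequently no independent set contains both vertices of one class together with a vertex of another, since that needs two missing edges inside one class-pair; hence every independent set of size at least $3$ meets each class in at most one vertex. An independent set of size $4$ would then take one vertex per class with all six cross-pairs missing, which (as only one edge may be missing per pair) forces the complementary four vertices to form a $K_4$ and $\chi(G)\ge 4$; so $\alpha(G)\le 3$, and a proper $3$-coloring of the $8$ vertices must split them as $3+3+2$. But two disjoint independent triples correspond to two distinct $3$-subsets of $\{A,B,C,D\}$, which share a pair of classes; each triple then contributes a missing edge in that shared class-pair, and since only one edge may be missing there these coincide, forcing the triples to intersect---a contradiction. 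Therefore $\chi(G)\ge 4$, contradicting $\chi(G)=3$, and the case $k=4$ is impossible.

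The step I expect to cost the most work is justifying that a minimal counterexample actually possesses a frozen $k$-coloring: the hypothesis $\Kc(G,k)\ge 2$ only guarantees two Kempe classes, and one must show that minimality---after stripping dominating and, if helpful, low-degree vertices---concentrates the obstruction into a single rigid coloring all of whose bichromatic subgraphs are connected. Once that structural reduction is secured, the class-size bound and the parity-flavored $k=4$ argument above conclude quickly; the remaining bookkeeping (that a frozen coloring uses all $k$ colors, and that the dominating-vertex reduction respects $\chi$) is routine.
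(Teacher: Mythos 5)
Your argument rests on one central claim: that a minimal counterexample $G$ (with $|V(G)|\le 8$, $k=\chi(G)+1$) must possess a \emph{frozen} $k$-coloring, i.e.\ a coloring in which every pair of color classes induces a connected subgraph. This is a genuine gap, not a routine step. The hypothesis $\Kc(G,k)\ge 2$ says only that the colorings split into at least two Kempe classes; nothing forces any single coloring in any class to be rigid. There is no known lemma (in this paper or in the literature it cites) asserting that a non-trivial Kempe class must contain a frozen coloring, even under vertex-minimality, and neither the dominating-vertex reduction nor the Las Vergnas--Meyniel low-degree reduction ($\delta(G)\ge\chi(G)+1$ in a minimal counterexample) produces one. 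Frozen colorings are the only known \emph{mechanism} for building examples with $\Kc\ge 2$ (it is how Lemma~\ref{lem:h} works), but ``the only known mechanism'' is not ``the only possible mechanism,'' and proving the latter is plausibly as hard as, or harder than, the theorem you are trying to prove. You flag this step yourself as the costly one, but the proof cannot be completed as outlined without it, and everything downstream (the bound $|V(G)|\ge 2k$, the $k=4$ endgame) collapses if it fails.

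For contrast, the paper never needs such a structural claim. It inducts on $|V(G)|$ using three reductions --- bipartite graphs (Mohar), vertices of degree less than $\chi(G)+1$ (Las Vergnas--Meyniel), and \emph{antisimplicial} vertices (vertices dominating $G-S$ for some independent set $S\ni v$) --- which pin down $\delta(G)\ge 4$, $\delta(\overline{G})\ge 2$, and $|V(G)|\in\{7,8\}$; it then lists the few possible complements ($C_3+C_5$, $C_4+C_4$, $C_8$ when $\chi\ge 4$; three specific 8-vertex graphs when $\chi=3$) and shows directly, by explicit Kempe swaps, that \emph{all} $(\chi+1)$-colorings are equivalent --- no frozen coloring ever enters. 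That said, the parts of your argument that follow the frozen-coloring assumption are correct and rather elegant: a singleton class would give a dominating vertex, so all classes have size at least $2$, killing $k\ge 5$ at once; and for $k=4$ your observation that each class-pair misses at most one edge of $K_{2,2}$, so any independent transversal forces a $K_4$ and any $3+3+2$ coloring forces two disjoint triples to reuse the same missing edge, is a clean contradiction. If the frozen-coloring claim could ever be established for minimal counterexamples, your route would be considerably shorter than the paper's case analysis; as it stands, it is an unproven (and possibly false) assumption doing the heavy lifting.
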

\begin{proof}
We prove that if $|V(G)|\le 8$, then $\Kc(G,\chi(G)+1)=1$.
We proceed by induction on $|V(G)|$.  The statement is trivial if $G$ or
its complement $\overline{G}$ is a complete graph, which includes the base case 
$|V(G)|\le 2$.
Mohar \cite{mohar-survey} observed  that if $\chi(G)=2$, then $\Kc(G,k)=1$
for all $k\ge 2$.  So we assume henceforth that $\chi(G)\ge 3$.  If there exists
$v$ such that $d(v)<\chi(G)+1$, then $\Kc(G,\chi(G)+1)=1$ if (and only if)
$\Kc(G-v,\chi(G)+1)=1$, so we are done by the induction hypothesis (this was
proved by Las Vergnas and Meyniel~\cite{LVM}; Mohar also includes a
proof~\cite[Prop~2.4]{mohar-survey}).  So we assume
henceforth that $\delta(G)\ge\chi(G)+1\ge 4$.

If $G$ has a dominating vertex $v$, then $\Kc(G,\chi(G)+1)=1$ if and only if
$\Kc(G-v,\chi(G))=\Kc(G-v,\chi(G-v)+1)=1$, so we are again done by the
induction hypothesis.  However, we prove something stronger: If there exists
an independent set $S\subset V(G)$ and there exists $v\in S$
such that $v$ is dominating in $G-S$, then we are also done by induction.
To see this, note that an arbitrary $(\chi(G)+1)$-coloring of $G$ is
$(\chi(G)+1)$-equivalent to one that uses a common color on all of $S$ (recolor
$S$ with the color of $v$).
But now deleting $S$ gives $\Kc(G,\chi(G)+1)=1$ if
$\Kc(G-S,\chi(G))=\Kc(G-S,\chi(G-S)+1)=1$, and the final statement holds by the
induction hypothesis.  In $\overline{G}$, the vertex $v$ above is a
simplicial vertex, i.e., its neighborhood is a clique.  Thus, we call $v$ an
\emph{antisimplicial vertex}.  Henceforth, we assume that $G$ has no
antisimplicial vertex.  This implies that $\delta(\overline{G})\ge 2$.
Thus, $|V(G)|\ge \delta(G)+1+\delta(\overline{G})\ge 4+1+2=7$.

If $\chi(G)\ge 4$, then $\delta(G)\ge 5$. Since $|V(G)|\le 8$, we get that
$|V(G)|=8$ and $\overline{G}$ is 2-regular.  Thus, $\overline{G}$ is (i)
$C_3+C_5$ or (ii) $C_4+C_4$ or (iii) $C_8$ (here $+$ denotes disjoint union). 
In (i), $G$ contains an antisimplicial vertex, a contradiction.  
%\textbf{[ Case (ii) and (iii) require short case analyses.  
%Add these.]}  
(ii) Fix an arbitrary 4-coloring $\beta$.  
We will show that every 5-coloring of $G$ is 5-equivalent to $\beta$.
Starting from an arbitrary 5-coloring $\alpha$,
if any color appears on a single vertex, we recolor that vertex to merge two
color classes.  Now we can use Kempe swaps to arrange that every color that
appears in $\beta$ also appears in our current coloring on the same
$\overline{C_4}$ as in $\beta$.  Finally, we recolor
each $\overline{C_4}$ independently, since $\Kc(\overline{C_4},k)=1$ for all
$k\in\{2,3\}$.  

(iii) Each 5-coloring has either (a) four color classes of size
2 or (b) three color classes of size 2 and two of size 1.  In case (a), we have
two possibilities (up to permuting colors).  In case (b), the color classes of
size 1 are at distance 1 or 3 along the cycle in $\overline{G}$, and by
recoloring along a path between them (in $\overline{G}$), we can reach each of
the possibilities in (a).  Thus it suffices to note that to permute colors $i$
and $j$ in a 4-coloring of $\overline{C_8}$, we simply use an $i,j$-Kempe swap.

So we now assume that $\chi(G)=3$. 
If $|V(G)|=7$, then
$\overline{G}$ is 2-regular, so $\overline{G}$ is $C_3+C_4$ or $C_7$.  The first
case yields a contradiction, since $G$ contains an antisimplicial vertex.  And
the second yields a contradiction, since $\chi(G)=4$.  Thus, we henceforth
assume that $|V(G)|=8$.

To recap, we know that (i) $|V(G)|=8$, (ii) $\chi(G)=3$, (iii) $\delta(G)\ge 4$,
(iv) $\delta(\overline{G})\ge 2$, and (v) $G$ contains no antisimplicial vertex.
Note that (i) and (iii) imply $\Delta(\overline{G})\le 3$.  Together with (v),
this implies that $\omega(\overline{G})\le 3$. In fact, by (ii),
$\overline{G}$ contains a (spanning) copy of $K_3+K_3+K_2$; call it $S$. 
If all neighbors of a vertex $v$ in $\overline{G}$ are in $S$, then $v$ is
antisimplicial, a contradiction.  Thus, %$|E(\overline{G})|\in\{11,12\}$.
$|E(\overline{G})|\ge |E(G[S])|+|V(G)|/2=7+4=11$.
Further, we have only 3 possibilities for $\overline{G}$, shown in
Figure~\ref{fig1}.  Let $\alpha$ be the 3-coloring of $G$ that uses a common color
on each clique of $S$ (in $\overline{G}$).
In each case for $G$, we prove that every 4-coloring is 4-equivalent to
$\alpha$.  Fix an arbitrary 4-coloring of $G$; call it $\beta$.

\begin{figure}[!t]
\centering
\def\rad{2cm}
\begin{tikzpicture}[ultra thick, scale=.4, xscale=.8]
\tikzstyle{uCircle}=[shape = circle, minimum size = 6pt, inner sep = 3pt, outer sep = 0pt, fill = white, draw]
\tikzstyle{uF}=[shape = circle, minimum size = 6pt, inner sep = 1.3pt, outer sep = 0pt, fill = white, draw]
\tikzset{every node/.style=uCircle}
\draw (0,0) node[uF] (A) {\tiny{3}} --++ (150:\rad) node[uF] (B) {\tiny{1}} --++ (270:\rad) node[uF] (C) {\tiny{2}}
--++ (30:\rad) ++ (0:\rad) node[uF] (D) {\tiny{3}} --++ (0:\rad) node[uF] (E) {\tiny{4}} ++ (0:\rad)
node[uF] (F) {\tiny{4}} --++ (30:\rad) node[uF] (G) {\tiny{1}} --++ (270:\rad) node[uF] (H) {\tiny{2}} --++ (150:\rad);
\draw[semithick] (A) -- (D) (E) -- (F) (B) -- (G) (C) -- (H);

\draw (1.5*\rad,-1.1*\rad) node[draw=none, fill=none] {(i)};

\begin{scope}[xshift=6in]
\draw (0,0) node (A) {} --++ (150:\rad) node (B) {} --++ (270:\rad) node (C) {}
--++ (30:\rad) ++ (0:\rad) node (D) {} --++ (0:\rad) node (E) {} ++ (0:\rad)
node (F) {} --++ (30:\rad) node (G) {} --++ (270:\rad) node (H) {} --++ (150:\rad);
\draw[semithick] (A) -- (D) (E) -- (F) (C) -- (H);
\draw (B) edge[semithick,bend left=20] (E);
\draw (D) edge[semithick,bend left=20] (G);
\draw (1.5*\rad,-1.1*\rad) node[draw=none, fill=none] {(ii)};
\end{scope}

\begin{scope}[xshift=12in]
\draw (0,0) node (A) {} --++ (150:\rad) node (B) {} --++ (270:\rad) node (C) {}
--++ (30:\rad) ++ (0:\rad) node (D) {} --++ (0:\rad) node (E) {} ++ (0:\rad)
node (F) {} --++ (30:\rad) node (G) {} --++ (270:\rad) node (H) {} --++ (150:\rad);
\draw[semithick] (A) -- (D) (E) -- (F) (C) -- (H);
\draw (B) edge[semithick,bend left=15] (D);
\draw (E) edge[semithick,bend left=15] (G);
\draw (1.5*\rad,-1.1*\rad) node[draw=none, fill=none] {(iii)};
\end{scope}
\end{tikzpicture}
%\captionsetup{width=.55\linewidth}
\caption{The three possibilities for $\overline{G}$ when $|V(G)|=8$.
The spanning subgraph $S$ is in bold. 
For Case (i), we show the only 4-coloring in which no edge of $S$
has the same color on both endpoints (up to permuting colors).
\label{fig1}}
\end{figure}
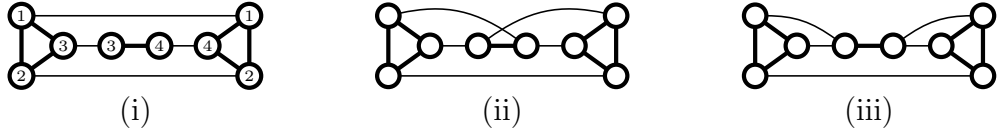

Suppose we are in (i) or (ii) of Figure~\ref{fig1} and $\beta$ uses a common
color, say 1, on two
vertices in the same clique $K$ in $S$.  We can recolor the third vertex of
$K$ (if it exists) to also use 1 (since 1 cannot be used on any other vertex). 
Now we are done, since $\chi(G-K)=2$, and thus $\Kc(G-K,3)=1$.  So it suffices
to prove that $\beta$ uses 1 on two vertices in the same clique in $S$.

In case (ii) this holds because
deleting the edges of $S$ and taking the complement yields a
graph with chromatic number 5.  In case (i), we have one possible 4-coloring
that uses no common color on endpoints of any edge in $S$.  However, %two vertices
%from each of 
two of its color classes together induce a 4-cycle in $\overline{G}$.  
So after one Kempe swap, we reach a coloring with a common color on endpoints of 
some edge in $S$, and we are done.  

Finally, we consider (iii).  The analysis is nearly the same as (ii).
The difference is that some edges of $S$ lie in copies of $K_3$ not in $S$.
So if a common color 1 is used in $\beta$ on a $K_3$, call it $K$, not in $S$,
we cannot necessarily move 1 to a clique of $S$.  However, each $K_3$ lies in a
spanning subgraph isomorphic to $S$.  Thus, from $\beta$ we can reach a
3-coloring, %; and from that 3-coloring we can use color 4 on the $K_2$ in $S$,
finishing as above.
\end{proof}

Theorem \ref{smallest-lem} motivates the following revised
version of Conjecture \ref{main-conj}.

\begin{conj}
\label{main-conj-revised}
If $G$ is a $(k-1)$-colorable $B + E_\ell$ graph with $k \ge 4$ and $\ell <
\frac{k^2+8k-45+\Ix{{k-1}}}4$, then $\Kc(G, k) = 1$.
\end{conj}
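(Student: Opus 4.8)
The plan is to run the minimal-counterexample argument of Theorem~\ref{thm2}, strengthened in two places to absorb the larger edge budget. Recall from the proof of Theorem~\ref{thm1} that $\frac{k^2+8k-45+\Ix{{k-1}}}4=f(k+5)-15$, where $15=\binom92-21$ is the edge-deficit of the order-$9$ graph $\wh H$ from Lemma~\ref{lem:h}; so the statement asserts that the construction $G_k$ is \emph{extremal}, i.e.\ the sparsest (in intra-part edges) $(k-1)$-colorable $B+E_\ell$ with $\Kc(G,k)\ge2$. I would fix a bipartition $(S,T)$ realizing $\ell=|E(G[S])|+|E(G[T])|$ and take $G$ to be a counterexample with $|V(G)|$ minimum; as in Theorem~\ref{thm2}, $|V(G)|>k$. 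The goal is to reduce every $k$-coloring to a $(k-1)$-coloring and then show all $(k-1)$-colorings are $k$-equivalent.

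The first change is to the reduction-to-$(k-1)$-colorings step (Claim~\ref{claim:1}). That counting produced a mergeable pair only because $\ell<f(k)$; now $\ell$ may be as large as $f(k)+\Theta(k)$, so the estimate $\binom k2-\ell$ for the number of no-edge pairs no longer beats the $f(k)$ bound on the edges of the auxiliary bipartite graph $B'$, and the naive count yields nothing. Two ingredients unused in Theorem~\ref{thm2} should rescue it: the hypothesis that $G$ is $(k-1)$-colorable, which limits how many colors can genuinely be forced apart; and the observation that a $k$-coloring with \emph{no} mergeable pair is exactly a reduction-resistant coloring, which (as in the extremal step below) should not occur unless the intra-part edges are abundant. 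My target is a dichotomy: every $k$-coloring either admits a mergeable pair or certifies $\ell\ge f(k+5)-15$. Establishing this quantitative dichotomy is the first hurdle.

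The second change is to the identification step (the second claim of Theorem~\ref{thm2}). Rather than picking an arbitrary non-adjacent pair in a part, I would pick two vertices $x,y$ that receive a common color in some $(k-1)$-coloring; such a monochromatic (hence non-adjacent, hence same-part) pair exists by pigeonhole whenever a part has at least $k$ vertices (if both parts are smaller then $|V(G)|<2k$ and I would treat $G$ directly in the extremal step). Recoloring $x,y$ with the spare color and identifying them yields $G'$ with $|V(G')|<|V(G)|$. Two hypotheses must be rechecked before invoking minimality: $G'$ is still $(k-1)$-colorable, which holds because $x,y$ were monochromatic in a $(k-1)$-coloring; and $\ell(G')\le\ell(G)<f(k+5)-15$, which holds because identification never increases the intra-part edge count and the bound does not depend on $|V(G)|$. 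Lifting a Kempe sequence from $G'$ to $G$ (the two identified vertices move in lockstep) then shows any two $(k-1)$-colorings of $G$ are $k$-equivalent, completing the contradiction whenever both reductions apply; the remaining case drives the next paragraph.

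The crux — and, I expect, the step requiring genuinely new ideas — is a single lower-bound lemma underlying both strengthenings: \emph{if $G$ is $(k-1)$-colorable and some $k$-coloring cannot be Kempe-reduced to a $(k-1)$-coloring, then $\ell\ge f(k+5)-15$.} Such a reduction-resistant coloring is a frozen-type obstruction, and the task is to show it \emph{costs} at least $f(k+5)-15$ intra-part edges. When the parts are nearly complete, convexity of $\binom x2$ already gives $\ell\ge f(|V(G)|)$ minus the intra-part non-edges, so the content is to bound the non-edges an obstruction can use. Theorem~\ref{smallest-lem} supplies the order bound — a frozen gadget needs $\ge9$ vertices — but here I would need a \emph{quantitative} strengthening: that the sparsest order-$9$ graph $H$ with $\Kc(H,\chi(H)+1)\ge2$ has at least $21$ edges (equivalently, at most $15$ non-edges), and that gadgets on more vertices cannot improve on this, since $f(n)$ grows by $\Theta(k)$ per extra vertex while a fixed gadget offers only boundedly many extra non-edges. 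I would attempt the reduction to Theorem~\ref{smallest-lem} by contracting each part onto its color classes (as in the construction of $B'$), landing on a graph of controlled order, and then tracking how intra-part non-edges survive the contraction. Converting the order bound $9$ into the sharp deficit bound $15$ is the principal difficulty, and the authors' stated uncertainty for $k\in\{5,6,7\}$ suggests that even a correct general argument may leave the smallest cases open.
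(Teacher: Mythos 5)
First, a point of calibration: the statement you are proving is Conjecture~\ref{main-conj-revised} --- the paper does \emph{not} prove it. It is posed as an open problem, motivated by the sharpness construction of Theorem~\ref{thm1} and by Theorem~\ref{smallest-lem}; so there is no paper proof to compare against, and a complete argument would be a new result, not a reconstruction. Measured against that bar, your proposal has a genuine gap, and you name it yourself: the entire plan funnels into the ``quantitative dichotomy'' lemma (a $k$-coloring that cannot be Kempe-reduced to a $(k{-}1)$-coloring forces $\ell\ge f(k+5)-15$), and you give no argument for it. Nothing in the paper supplies it either: Theorem~\ref{smallest-lem} bounds only the \emph{order} of a graph $H$ with $\Kc(H,\chi(H)+1)\ge 2$ (at least $9$), not its edge count, so your needed strengthening --- that every order-$9$ obstruction has at most $15$ non-edges, and that obstructions on more vertices cannot beat the deficit $15$ --- is precisely the sharp content of the conjecture. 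Moreover your heuristic for dismissing larger gadgets (``$f(n)$ grows by $\Theta(k)$ per extra vertex while a fixed gadget offers only boundedly many extra non-edges'') is wrong as stated: a gadget on $n$ vertices can carry $\Theta(n^2)$ non-edges, so the trade-off between gadget order and gadget sparsity is not settled by a per-vertex count. A further conceptual issue is that a reduction-resistant $k$-coloring is a global property of the coloring of $G$; it need not localize to a frozen induced subgraph at all, so even a sharp edge-version of Theorem~\ref{smallest-lem} would not immediately yield the dichotomy via your proposed contraction to $B'$.

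There are also secondary gaps in the two ``strengthened'' reductions. In the identification step, restricting to pairs $x,y$ that are monochromatic in some $(k{-}1)$-coloring (needed so that $G'$ inherits $(k{-}1)$-colorability) destroys the endgame of Theorem~\ref{thm2}: you can no longer conclude that $G[S]$ and $G[T]$ are complete, only a weaker statement about repeated colors in $(k{-}1)$-colorings, and you offer no replacement structure theorem. Even if completeness were recovered, the convexity count with the larger budget gives only $f(|V(G)|)\le \ell< f(k+5)-15$, i.e.\ $|V(G)|\le k+4$ or so, which no longer contradicts $|V(G)|>k$; the residual cases $k<|V(G)|\le k+4$, like your deferred ``$|V(G)|<2k$, treat $G$ directly'' case, are left with no argument --- and these near-complete small cases are exactly where the construction $G_k$ of Theorem~\ref{thm1} lives, so they cannot be waved away. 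In short, your proposal is a reasonable and honestly flagged \emph{program} (and your identity $\frac{k^2+8k-45+\Ix{{k-1}}}4=f(k+5)-15$ and the $\ell(G')\le\ell(G)$ check are correct), but its load-bearing lemma and its terminal case analysis are both missing, which is consistent with the statement remaining a conjecture in the paper.
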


\subsection*{Acknowledgements}

Thanks to two referees for their constructive comments, which helped to improve our presentation.
Carl Feghali was supported by Agence Nationale de la Recherche (France) under
research grant ANR DIGRAPHS ANR-19-CE48- 0013-01.

\scriptsize{
\bibliographystyle{habbrv}
\bibliography{almost-bipartite}
}

\end{document}